\newtheorem{theorem}{Theorem}
\newtheorem{lemma}{Lemma}
\newcommand{\lng}{\langle}
\newcommand{\rng}{\rangle}
\newcommand{\sa}{\sqcap}
\newcommand{\su}{\sqcup}
\newcommand{\R}{\mathbb R}
\newcommand{\Hi}{\mathbb H}
\newenvironment{proof}{{\noindent\bf Proof.}}{\hfill$\Box$\\}
\begin{document}

\title{A duality between the metric projection onto a convex cone and the metric projection 
onto its dual in Hilbert spaces\thanks{{\it 1991 A M S Subject Classification:} Primary 90C33,
Secondary 15A48; {\it Key words and phrases:} convex sublattices, isotone projections. }}
\author{S. Z. N\'emeth\\School of Mathematics, The University of Birmingham\\The Watson Building, Edgbaston\\Birmingham B15 2TT, United Kingdom\\email: nemeths@for.mat.bham.ac.uk}
\date{}
\maketitle

\begin{abstract}
	If $K$ and $L$ are mutually dual closed convex cones in a Hilbert space $\Hi$ with the 
	metric projections onto them denoted by $P_K$ and $P_L$ respectively, then the 
	following two assertions are equivalent: (i) $P_K$ is isotone with  respect to the 
	order induced by $K$ (i. e. $v-u\in K$ implies $P_Kv-P_Ku\in K$);  (ii) $P_L$ is 
	subadditive with respect to the order induced by $L$ (i. e. $P_Lu+P_Lv-P_L(u+v)\in L$ 
	for any $u,\;v \in \Hi$). This extends the similar result of A. B. N\'emeth and 
	the author for Euclidean spaces. The extension is essential because the proof of 
	the result for Euclidean spaces is essentially finite dimensional and seemingly 
	cannot be extended for Hilbert spaces. The proof of the result for Hilbert spaces is 
	based on a completely different idea which uses extended lattice operations.
\end{abstract}

\section{Introduction}
For simplicity let us call a closed convex cone simply cone.
Both the isotonicity \cite{IsacNemeth1990b,IsacNemeth1990c} and the subadditivity 
\cite{AbbasNemeth2012,NemethNemeth2012}, of a projection onto a pointed cone with respect to 
the order defined by the cone can be used for iterative methods for finding solutions of 
complementarity problems with respect to the cone. Iterative methods are widely used for 
solving various types of equilibrium problems (such as variational inequalities, 
complementarity problems etc.) In the recent years the isotonicity gained more and more ground
for handling such problems (see \cite{NishimuraOk2012}, \cite{CarlHeikilla2011} and the 
large number of references in \cite{CarlHeikilla2011} related to ordered vector spaces). If
a complementarity problem is defined by a cone $K\subset\Hi$ 
and a mapping $f:K\to\Hi$, where $(\Hi,\lng\cdot,\cdot\rng)$ is a Hilbert space, then $x$ 
is a solution of the corresponding complementarity problem (that is, $x\in K$, $f(x)\in K^*$ 
and $\lng x,f(x)\rng=0$, where $K^*$ is the dual of $K$), if and only if $x=P_K(x-f(x))$. 
Thus, if
$f$ is continuous and the sequence $x^n$ given by the iteration $x^{n+1}=P_K(x^n-f(x^n))$ is 
convergent, then its limit is a fixed point of the mapping $x\mapsto P_K(x-f(x))$ and 
therefore
a solution of the corresponding complementarity problem. A specific way for showing the 
convergence of the sequence $x^{n+1}=P_K(x^n-f(x^n))$ is to use the isotonicity 
\cite{IsacNemeth1990b,IsacNemeth1990c} or subadditivity \cite{AbbasNemeth2012,NemethNemeth2012}
of $P_K$ with respect to the order induced by the cone $K$. In finite dimension the 
isotonicity (subadditivity) of $P_K$ imposes strong constraints on the structure of $K$ 
($K^*$). If $P_K$ is isotone (subadditive), then $K$ ($K^*$) has to be a direct sum of the 
subspace $V=K\cap(-K)$ ($V=K^*\cap(-K^*)$) with a latticial cone of a specific structure in 
the orthogonal complement of the subspace $V$ (see 
\cite{GuyaderJegouNemeth2012,IsacNemeth1992,NemethNemeth2012}). There exist cones of this 
type which are important from the practical point of view, such as the monotone 
cone (see \cite{GuyaderJegouNemeth2012}) and the monotone nonnegative cone 
(see \cite{Dattorro2005}). For Euclidean spaces 
the authors of \cite{NemethNemeth2012} showed that $P_K$ is isotone with respect to the order 
induced by $K$ if and only if $P_L$ is subadditive with respect to the order induced by $L$, 
where $K$ and $L$ are mutually dual pointed closed convex cones. If $K$ is also pointed and 
generating, then the isotonicity of $P_K$ with 
respect to the order induced by $K$ implies the latticiality of the cone in Hilbert spaces as 
well (see \cite{IsacNemeth1990,IsacNemeth1990b,IsacNemeth1990c}). The main result of this paper
states that $P_K$ is isotone with respect to the order induced by $K$ (i.e., $v-u\in K$ implies $P_Kv-P_Ku\in K$) if and only if $P_L$ is 
subadditive with respect to the order induced by $L$
(i.e., $P_Lu+P_Lv-P_L(u+v)\in L$ for any $u,\;v \in \R^n$), where $K$ and $L$ are mutually dual 
pointed closed convex cones of a Hilbert space, thus extending the result of 
\cite{NemethNemeth2012}. This result also implies that if $K$ is a pointed generating cone 
in a Hilbert space such that $P_K$ is subadditive with respect to the order induced by 
$K$, then it must be latticial. The latter two results have been already proved in Euclidean 
spaces (see \cite{NemethNemeth2012} and \cite{IsacNemeth1992}), but they were open until now 
in Hilbert spaces, except for the particular case of a Hilbert lattice \cite{Nemeth2003}. 
Although originally 
motivated by complementarity problems, recently it turned out that the isotonicity and 
subadditivity of projections are also motivated by other practical problems at least as
important as the complementarity problems such as the problem of map-making from relative 
distance information e.g., stellar cartography  
(see 
\vspace{2mm}

\noindent {\small www.convexoptimization.com/wikimization/index.php/Projection\_on\_Polyhedral\_Convex\_Cone}
\vspace{2mm}

\noindent and Section 5.13.2 in \cite{Dattorro2005}) and isotone regression 
\cite{GuyaderJegouNemeth2012}, where the equivalence between two classical algorithms in 
statistics is proved by using theoretical results about isotone projections. We remark that 
our proofs are essentially infinite dimensional and apparently there is no easy way to extend 
the methods of \cite{NemethNemeth2012} to infinite dimensions. The paper 
\cite{GuyaderJegouNemeth2012} shows that investigation of the structure of cones admitting 
isotone and subadditive projections is important for possible future applications. The proofs 
presented here also provide a more elegant way of 
proving the results of \cite{NemethNemeth2012}. However, the difference is that they do not
contain the proof of the latticiality of the involved cones. (For pointed generating cones in 
Hilbert spaces this is the consequence of the main result in \cite{IsacNemeth1990}.)

The structure of this note is as follows: After
some preliminary terminology we introduce the main tools for our proofs the Moreau's 
decomposition theorem (i.e., Lemma \ref{lm}) and the lattice-like
operations related to a projection onto a cone and then we proceed to showing our main result. 

\section{Preliminaries}

Let $\Hi$ be a a real Hilbert space endowed with a scalar product $\lng\cdot,\cdot\rng$ and let 
$\|.\|$ the norm generated by the scalar product $\lng\cdot,\cdot\rng$.

Throughout this note we shall use some standard terms and results from convex geometry 
(see e.g. \cite{Rockafellar1970}). 

Let $K$ be a \emph{closed convex cone} in $\Hi$, i. e., a nonempty closed set with 
$tK+sK\subset K,\;\forall \;t,s\in \R_+=[0,+\infty)$. The closed convex cone $K$ is called 
\emph{pointed}, if $K\cap(-K)=\{0\}.$

The cone $K$ is {\it generating} if $K-K=\Hi$.
 
The convex cone $K$ defines a pre-order relation (i.e., a reflexive and transitive binary 
relation) $\leq_K$, where $x\leq_Ky$ if and only if $y-x\in K$. 
The relation is {\it compatible with the vector structure} of $\Hi$ in the sense that 
$x\leq_K y$ implies $tx+z\leq_K ty+z$ for all $z\in \Hi$, and all 
$t\in \R_+$. If $\sqsubseteq$ is a reflexive and transitive relation on $\Hi$ which is 
compatible with the vector structure of $\Hi$, then $\sqsubseteq=\leq_K$ with 
$K=\{x\in\Hi:0\sqsubseteq x\}.$ If $K$ is pointed, then $\leq_K$ it is \emph{antisymmetric} 
too, that is $x\leq_K y$ and 
$y\leq_K x$ imply that $x=y.$ Hence, in this case $\le_K$ becomes an order relation (i.e, a
reflexive, transitive and antisymmetric binary relation). The elements $x$ and $y$ are called 
\emph{comparable} if $x\leq_K y$ or $y\leq_K x.$

We say that $\leq_K$ is a \emph{latticial order} if for each pair of elements $x,y\in \Hi$ 
there exist the lowest upper bound $\sup\{x,y\}$ (denoted by $x\vee y$) and the uppest lower bound $\inf\{x,y\}$ of
the set $\{x,y\}$ (denoted by $x\wedge y$)  with respect to the relation $\leq_K$. In this case $K$ is said a 
\emph{latticial or simplicial cone}, and $\Hi$ equipped with a latticial order is called a
\emph{Riesz space} or \emph{vector lattice}.

The \emph{dual} of the convex cone $K$ is the set 
$$K^*:=\{y\in \Hi:\;\lng x,y\rng \geq 0,\;\forall x\in K\}.$$ 
The set $K^*$ is  a closed convex cone.
If $K$ is a closed cone, then the extended Farkas lemma (see Exercise 2.31 (f)
in \cite{BoydVandenberghe2004}) says that $(K^*)^*=K.$
Hence denoting $L=K^*$ we see that $K=L^*$ and $L^*=K$. For the
closed cones $K$ and $L$ related by these relations we say that they
are \emph{mutually dual cones}.
The cone $K$ is called \emph{self-dual}, if $K=K^*.$ If $K$ is self-dual, then it is a 
generating, pointed, closed convex cone.

Let $K$ be a closed convex cone and $\rho:\Hi\to\Hi$ a mapping. Then, $\rho$ is called \emph{$K$-isotone} if $x\le_K y$ implies $\rho(x)\le_K\rho(y)$ and \emph{$K$-subadditive} 
if $\rho(x+y)\le_K\rho(x)+\rho(y)$, for any $x,y\in H$. 

Denote by $P_D$ the projection mapping onto a nonempty closed  convex set $D$ of the Hilbert 
space $\Hi,$ that is the mapping which associates to $x\in \Hi$ the unique nearest point of 
$x$ in $D$ (\cite{Zarantonello1971}):

\[ P_Dx\in D,\;\; \textrm{and}\;\; \|x-P_Dx\|= \inf \{\|x-y\|: \;y\in D\}. \]

Next, we shall frequently use the 
following simplified form of the Moreau's decomposition theorem \cite{Moreau1962}:
\begin{lemma}\label{lm}
	Let $K$ and $L$ be mutually dual cones in the Hilbert space $\Hi$. For any $x$ in $K$ 
	we have $x=P_Kx-P_L(-x)$ and $\lng P_Kx,P_L(-x)\rng=0$. The relation $P_Kx=0$ holds if 
	and only if $x\in -L$.
\end{lemma}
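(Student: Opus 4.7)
The plan is to reduce the statement to the variational characterization of the projection onto a closed convex cone. I would first verify that for any closed convex cone $K\subset\Hi$ with $L=K^*$, a point $y$ equals $P_Kx$ if and only if $y\in K$, $x-y\in -L$, and $\lng x-y,y\rng=0$. This follows from the standard first-order optimality condition $\lng x-y,z-y\rng\le 0$ for every $z\in K$: testing with $z=0$ and with $z=2y$ (both belonging to $K$ because $K$ is a cone and $y\in K$) forces $\lng x-y,y\rng=0$, whereupon the variational inequality reduces to $\lng x-y,z\rng\le 0$ for all $z\in K$, which is precisely $x-y\in -K^*=-L$. The converse implication amounts to a short expansion of $\|x-z\|^2-\|x-y\|^2$ using the three properties and the conic structure of $K$.

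With the characterization in hand, set $y:=P_Kx$ and $w:=y-x$, and check that $w$ satisfies the analogous characterization of $P_L(-x)$ with the roles of $K$ and $L$ swapped, namely (a) $w\in L$, (b) $-x-w\in -L^*=-K$, and (c) $\lng -x-w,w\rng=0$. Property (a) reads $y-x\in L$, which is exactly $x-y\in -L$; property (b) becomes $-y\in -K$, which holds since $y\in K$; and property (c) unfolds to $\lng y,x-y\rng=0$, one of the defining conditions of $y=P_Kx$. This yields $P_L(-x)=y-x$, hence $x=P_Kx-P_L(-x)$, and at the same time $\lng P_Kx,P_L(-x)\rng=\lng y,y-x\rng=-\lng y,x-y\rng=0$.

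For the final assertion, if $P_Kx=0$ the decomposition gives $x=-P_L(-x)\in -L$. Conversely, if $x\in -L$, then $-x\in L$ forces $P_L(-x)=-x$, while the candidate $y=0$ satisfies the three conditions of the characterization (namely $0\in K$, $x-0\in -L$, and $\lng x,0\rng=0$), giving $P_Kx=0$. The only step that needs genuine care is establishing the cone characterization of $P_K$; everything afterwards is a symmetric exchange of the roles of $K$ and $L$ and presents no real technical obstacle.
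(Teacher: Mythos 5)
Your proof is correct and complete. The paper itself gives no proof of this lemma — it is stated as a simplified form of Moreau's decomposition theorem with a citation to Moreau — and your argument (deriving the conic variational characterization of $P_K$ from the first-order optimality condition $\lng x-y,z-y\rng\le 0$, then checking by a symmetric swap of $K$ and $L$ that $P_Kx-x$ satisfies the characterization of $P_L(-x)$, using $L^*=K$) is precisely the standard proof of that theorem. One minor observation: the lemma's phrase ``for any $x$ in $K$'' must be a typo for ``for any $x$ in $\Hi$'', since the main theorem applies the identity $P_Lx = x + P_K(-x)$ at arbitrary points; your proof correctly handles the general case.
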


Let $K$ and $L$ be mutually dual cones in the Hilbert space $\Hi$. Define the 
following operations in $\Hi$: 
\[x\sa_K y=P_{x-K}y,\textrm{ }x\su_K y=P_{x+K}y,\textrm{ }x\sa_L y=P_{x-L}y,\textrm{ and }
x\su_L y=P_{x+L}y.\]
Assume that
the operations $\su_K$, $\sa_K$, $\su_L$ and $\sa_L$ have precedence over the addition of 
vectors and multiplication of vectors by scalars.

If $K$ is self-dual, then $\su_K = \su_L$ and $\sa_K=\sa_L$ and we arrive to the generalized
lattice operations defined by Gowda, Sznajder and Tao in \cite{GowdaSznajderTao2004}, and used 
by our paper~\cite{NemethNemeth2012b}. 

A direct checking yields that if $K$ is a self-dual latticial cone, then 
$\sa_K=\sa_L=\wedge$, and $\su_K=\su_L=\vee$.
That is $\sa_K$, $\sa_L$, $\su_K$ and $\su_L$ are some \emph{lattice-like operations}.

We shall simply call a set $M$ which is invariant with respect to the operations $\sa_K$, 
$\sa_L$, $\su_K$ and $\su_L$ \emph{$K$-invariant}. 

The following Theorem greatly extends Lemma 2.4 of \cite{NishimuraOk2012} and since it can be 
shown very similarly to Theorem 1 of \cite{NemethNemeth2012c} (i.e., the corresponding result
in Euclidean spaces), we state it here without proof.

\begin{theorem}\label{ISO}
	Let $K\subset\Hi$ be a closed convex cone and $C\subset\Hi$ be a closed convex set. Then $C$ is $K$-invariant, if and 
	only if $P_C$ is $K$-isotone. 
\end{theorem}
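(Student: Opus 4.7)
The plan is to prove the two implications separately, each time reducing the problem to a short inner-product identity through Moreau's decomposition (Lemma \ref{lm}) and the variational characterization of the metric projection.

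For the forward direction ``$C$ is $K$-invariant $\Rightarrow P_C$ is $K$-isotone'', I would fix $u\leq_K v$ and set $a=P_Cu$, $b=P_Cv$. By Moreau, the goal $a\leq_K b$ is equivalent to $P_L(a-b)=0$. The invariance of $C$ provides two test points in $C$, namely $a\su_K b$ and $a\sa_L b$, and a direct computation with Moreau's identity yields $(a\su_K b)-b=P_L(a-b)=a-(a\sa_L b)$. Testing these points against the variational inequalities for $b=P_Cv$ and $a=P_Cu$ produces $\langle P_L(a-b),v-b\rangle\leq 0$ and $\langle P_L(a-b),a-u\rangle\leq 0$. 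Summing, using $(v-b)+(a-u)=(v-u)+(a-b)$, and invoking the Moreau orthogonality identity $\langle P_L(a-b),a-b\rangle=\|P_L(a-b)\|^2$ reduces the sum to $\|P_L(a-b)\|^2+\langle P_L(a-b),v-u\rangle\leq 0$; since $P_L(a-b)\in L=K^*$ and $v-u\in K$, the second summand is $\geq 0$, forcing $\|P_L(a-b)\|^2\leq 0$.

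For the reverse direction ``$P_C$ is $K$-isotone $\Rightarrow C$ is $K$-invariant'', I would fix $x,y\in C$ and treat each of the four operations in turn. Illustrating with $z=x\sa_Ky$: Moreau gives $z\leq_K x$, $y-z\in L$, and the orthogonality $\langle x-z,y-z\rangle=0$. Setting $z':=P_Cz\in C$, $K$-isotonicity applied to $z\leq_K x$ yields $x-z'\in K$. The dual pairing $\langle y-z,x-z'\rangle\geq 0$ then collapses (via the orthogonality) to $\langle y-z,z-z'\rangle\geq 0$; combined with the variational inequality $\langle z'-z,y-z'\rangle\geq 0$ obtained by testing $P_Cz=z'$ against $y\in C$, the sum telescopes to $-\|z-z'\|^2\geq 0$, forcing $z=z'\in C$. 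The other three operations follow the same recipe: the element of $\{x,y\}$ with which $z$ is $K$-comparable feeds the $K$-isotonicity step, while the remaining element serves as the test point in the variational inequality for $P_C$.

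The main obstacle is bookkeeping rather than analytic depth: each of the four operations produces a different pattern of $K$- and $L$-comparisons with $x$ and $y$, and in each direction one has to identify precisely the pair of inequalities whose sum telescopes via Moreau orthogonality. A naive pairing fails: for instance, in the forward direction, replacing $\{\su_K,\sa_L\}$ by $\{\su_K,\sa_K\}$ would only yield the already known inequality $\langle P_L(a-b),v-u\rangle\geq 0$ rather than the desired bound on $\|P_L(a-b)\|^2$. Once the correct pair has been located, the remaining computation is routine.
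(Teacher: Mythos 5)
Your proof is correct: I checked both directions, including the key identities $(a\su_K b)-b=P_L(a-b)=a-(a\sa_L b)$ in the forward direction and the telescoping to $-\|z-z'\|^2\ge 0$ in the reverse direction, and the ``same recipe'' does indeed go through for all four operations (with $x$ feeding the isotonicity step for $\sa_K,\su_K$ and $y$ for $\sa_L,\su_L$). The paper itself states Theorem \ref{ISO} without proof, deferring to its Euclidean-space analogue elsewhere; your argument uses exactly the standard machinery that proof relies on (the variational characterization of the metric projection combined with Moreau's decomposition), and it is valid verbatim in Hilbert space, so you have in effect supplied the omitted proof rather than deviated from it.
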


\section{The main result}

\begin{theorem}\label{tm}
	Let $K,L$ be mutually dual closed convex cones in a Hilbert space $\Hi$. Then, the 
	following two statements are equivalent:
	\begin{enumerate}[(i)]
		\item\label{tma} $P_K$ is $K$-isotone.
		\item\label{tmb} $P_L$ is $L$-subadditive.
	\end{enumerate}
\end{theorem}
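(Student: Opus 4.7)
The plan is to translate both sides of the equivalence into conditions stated purely in terms of $P_K$ and the cone $K$, and then connect them through the extended lattice operations. Applying Moreau's lemma (Lemma~\ref{lm}) to $-x$ yields $P_L x = x + P_K(-x)$, hence
\[ P_L u + P_L v - P_L(u+v) = P_K(-u) + P_K(-v) - P_K(-(u+v)), \]
and the substitution $a=-u,\ b=-v$ shows that condition (ii) is equivalent to the statement
\[ (ii')\quad P_K a + P_K b - P_K(a+b) \in L \quad \text{for all } a,b \in \Hi. \]
On the other side, Theorem~\ref{ISO} applied with $C=K$ reformulates condition (i) as the $K$-invariance of the cone $K$ itself.

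Next, a short calculation from the definitions combined with Moreau's lemma produces the identities $x \su_K y = x + P_K(y-x)$ and $x \su_L y = y + P_K(x-y)$, from which it is immediate that $x,y\in K$ implies both $x\su_K y$ and $x\su_L y$ lie in $K+K\subseteq K$. Thus the $K$-invariance of $K$ reduces to the two conditions: for all $x,y\in K$,
\[ x \sa_K y = x - P_K(x-y) \in K \quad\text{and}\quad x \sa_L y = x - P_L(x-y) \in K. \]

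What remains is to prove the equivalence of $(ii')$ and this reduced form of (i). For $(ii') \Rightarrow (i)$, I would take $x,y\in K$ and feed $(ii')$ judiciously chosen pairs $(a,b)$ built from $x$, $y$ and their opposites, simplifying via Moreau's orthogonality relation $\lng P_K z, P_L(-z)\rng = 0$ and the identities $P_K z = z$ for $z\in K$, $P_K z = 0$ for $z\in -L$, in order to extract the two desired membership relations. For $(i) \Rightarrow (ii')$, using the Moreau characterization $z\in L \Leftrightarrow P_K(-z)=0$, the goal becomes showing $P_K\bigl(P_K(a+b)-P_K a-P_K b\bigr)=0$ for all $a,b\in\Hi$; I plan to deduce this by exhibiting $P_K a + P_K b - P_K(a+b)$, or equivalently its opposite, as the outcome of a chain of $\sa_L$-type operations applied to elements of $K$, each step preserved in $K$ by the invariance hypothesis, and then invoking Moreau's orthogonality to force the projection to vanish.

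The hard part will be this last step, and especially the direction $(i) \Rightarrow (ii')$. The pointwise identity $(ii')$ must be extracted from a set-theoretic invariance property of $K$, and the bridge is furnished precisely by the extended lattice operations alluded to in the introduction; success depends on choosing the correct substitutions in $(ii')$ for one direction and the correct $\sa_L$-decomposition of $P_K a + P_K b - P_K(a+b)$ for the other. Moreau's orthogonality relation should play a pivotal role in converting between inclusion in $L$ and vanishing of an appropriate $P_K$-image.
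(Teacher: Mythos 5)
Your reductions are all correct and match the paper's machinery: the identity $P_Lx=x+P_K(-x)$ does show that (ii) is equivalent to your $(ii')$, Theorem~\ref{ISO} does turn (i) into the $K$-invariance of $K$, and your formulas $x\su_K y=x+P_K(y-x)$, $x\su_L y=y+P_K(x-y)$ correctly dispose of the $\su$-operations, leaving only the $\sa$-conditions. The genuine gap is the entire remaining content of the theorem: the equivalence between ``$x\sa_K y,\,x\sa_L y\in K$ for all $x,y\in K$'' (a family of memberships \emph{in $K$} for arguments \emph{in $K$}) and $(ii')$ (a family of memberships \emph{in $L$} for arbitrary arguments). Your plan for bridging it --- ``judiciously chosen pairs $(a,b)$'' in one direction, a ``chain of $\sa_L$-type operations'' in the other --- is not an argument, and the obvious substitutions do not work: e.g.\ $(a,b)=(x-y,y)$ in $(ii')$ collapses to the Moreau identity $P_K(z)-z\in L$, and $(a,b)=(x,-y)$ yields only an $L$-membership, never the required $K$-membership of $x-P_K(x-y)$. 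In the converse direction, a chain of $\sa_L$-operations applied to elements of $K$ can only certify membership in $K$, whereas you need $P_K(a+b)-P_Ka-P_Kb\in-L$.

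The missing idea, which is the actual crux of the paper's proof, is that the four operations pair up as $x\sa_K y=y\sa_L x$ and $x\su_K y=y\su_L x$ (both equal $x-P_K(x-y)=y-P_L(y-x)$, resp.\ $x+P_K(y-x)=y+P_L(x-y)$), so that $K$-invariance and $L$-invariance of a set are literally the same property. This lets one apply Theorem~\ref{ISO} a \emph{second} time, with $L$ in the role of the ordering cone, and conclude that $P_K$ is $K$-isotone if and only if $P_K$ is $L$-isotone. Once the order has been switched to $\le_L$, everything becomes elementary: from $a+b\le_L P_Ka+P_Kb$ (Moreau) and $L$-isotonicity one gets $P_K(a+b)\le_L P_K(P_Ka+P_Kb)=P_Ka+P_Kb$, which is $(ii')$; conversely, $x\le_L y$ gives $P_Kx=x+P_L(-x)\le_L x+P_L(y-x)+P_L(-y)=P_Ky$ from the $L$-subadditivity of $P_L$. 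Without this double use of Theorem~\ref{ISO} you would have to reprove that theorem's content from scratch inside your ``hard step,'' so as written the proposal does not close.
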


\begin{proof}
	\vspace{1mm}

	(\ref{tma})$\implies$(\ref{tmb}): From Theorem \ref{ISO}, it follows that $K$ is $K$-invariant. From the definition of the $K$-invariance it is easy to see that $K$ is also 
	$L$-invariant. Hence, by using again Theorem \ref{ISO}, it follows that $P_K$ is $L$-isotone. Now let $x,y\in\Hi$ be arbitrary. From Lemma \ref{lm}, we have
	$x+y\le_L P_K(x)+P_K(y)$, because \[P_K(x)+P_K(y)-x-y=P_K(x)-x+P_K(y)-y=P_L(-x)+P_L(-y)\in L+L\subset L.\] Hence, by the $L$-isotonicity of $P_K$, we have 
	\[P_K(x+y)\le_L P_K(P_K(x)+P_K(y))=P_K(x)+P_K(y),\] which means that $P_K$ is $L$-subadditive. Thus, by using Lemma \ref{lm}, we get 
	\begin{gather*}
		P_L(x+y)=x+y+P_K(-x-y)\le_L x+y+P_K(-x)+P_K(-y)\\=x+P_K(-x)+y+P_K(-y)
		=P_L(x)+P_L(y),
	\end{gather*} 
	which is equivalent to the $L$-subadditivity of $P_L$. 
	\vspace{1mm}

	(\ref{tmb})$\implies$(\ref{tma}):  Let $x\le_L y$. Then, $y-x\in L$ and therefore, by the $L$-subadditivity of $P_L$ and Lemma \ref{lm}, we get 
	\begin{eqnarray*}
		P_K(x)=P_L(-x)+x=P_L(y-x-y)+x\le_L P_L(y-x)+P_L(-y)+x\\=y-x+P_L(-y)+x=P_K(y)
	\end{eqnarray*}
	Hence, $P_K$ is $L$-isotone and therefore Theorem \ref{ISO} implies that $K$ is $L$-invariant. From the definition of the $K$-invariance it is easy to see that $K$ is also 
	$K$-invariant. Therefore, by using again Theorem \ref{ISO}, it follows that $P_K$ is $K$-isotone.
\end{proof}



\bibliographystyle{habbrv}
\bibliography{arxiv-hildual}

\end{document}